\newcommand{\RR}{\mathbb{R}}
\tikzstyle{vertex}=[circle, draw, fill=black, inner sep=0pt, minimum size=4pt]
\tikzstyle{smallvertex}=[circle, draw, fill=black, inner sep=0pt, minimum size=2pt]
\tikzstyle{edge}=[line width=1.5pt,black!50!white]
\tikzstyle{dedge}=[line width=1.5pt,black!50!white, densely dashed]
\tikzstyle{bdedge}=[line width=1.5pt,NavyBlue, densely dashed]
\tikzstyle{rdedge}=[line width=1.5pt,Red, densely dashed]
\tikzstyle{redge}=[line width=1.5pt,Red]
\tikzstyle{bedge}=[line width=1.5pt,NavyBlue]
\tikzstyle{edgeq}=[edge,gray!60,densely dashed]
\tikzstyle{lnode}=[circle,white,draw=black!80!white,fill=black!80!white,inner sep=0.5pt, font=\scriptsize]
\def\Red{\textcolor{red}}
\def\Blue{\textcolor{blue}}
\newtheorem{thm}{Theorem}
\newtheorem{defn}{Definition}
\newtheorem{lem}{Lemma}
\newtheorem{cor}{Corollary}
\newcommand{\maxSevenVert}{
			\coordinate(1) at (0, -1);
		      \coordinate (2) at (-1.9, 0);
		      \coordinate (3) at (-0.9, -0.3) ;
		      \coordinate (4) at (0.85, -0.3) ;
		      \coordinate (5) at (1.8,0.0) ;
		      \coordinate (6) at (-0.25, 0.25) ;
	   	      \coordinate (7) at (0,1) ;}
\newcommand{\ringEightVert}{
			\coordinate(1) at (0, -1);
		      \coordinate (2) at (-1.9, 0);
		      \coordinate (3) at (-0.9, -0.3) ;
		      \coordinate (4) at (0.8, -0.3) ;
		      \coordinate (5) at (1.9,0.0) ;
		      \coordinate (6) at (0.9, 0.3) ;
	   	      \coordinate (7) at (-0.8,0.3) ;
	   	      \coordinate (8) at (0,1) ;}
\newcommand{\maxEightVert}{
	\coordinate(1) at (0, -1);
	\coordinate (2) at (-1.9, 0);
	\coordinate (3) at (-0.9, -0.3) ;
	\coordinate (4) at (0.85, -0.3) ;
	\coordinate (5) at (1.8,0.0) ;
	\coordinate (6) at (-0.3, 0.3) ;
	\coordinate (7) at (-0.7,1) ;
	\coordinate (8) at (0.7,1) ;}
\newcommand{\HI}{
\begin{tikzpicture}[scale=0.8]
	\node[smallvertex] (1) at (0, 0) {};
	\node[smallvertex] (2) at (0.25, 0.4) {};
	\node[smallvertex] (3) at (0.7, 0.3) {};      
	\draw[gray!60,densely dashed] (0.3,0.2) circle (0.6cm);
	\draw[very thick,->] (0.3,-0.5) -- (0.3,-0.9);
	\begin{scope}[yshift=-1.8cm]
		\node[smallvertex] (1) at (0, 0) {};
		\node[smallvertex] (2) at (0.25, 0.4) {};
		\node[smallvertex] (3) at (0.7, 0.3) {}; 
		\node[smallvertex] (w) at (0.65, -0.55) {}; 
		\draw[gray!60,densely dashed] (0.3,0.2) circle (0.6cm);
		\draw[bedge] (1) to (w);
		\draw[bedge] (2) to (w);
		\draw[bedge] (3) to (w);
	\end{scope}
	\node (u) at (1.0,0) {};
	\node (u) at (-0.4,0) {};
\end{tikzpicture}
}
\newcommand{\HII}{
\begin{tikzpicture}[scale=0.8]
	\node[smallvertex] (1) at (0, 0) {};
	\node[smallvertex] (2) at (0.35, 0.5) {};
	\node[smallvertex] (3) at (0.7, 0.3) {};      
	\node[smallvertex] (4) at (0.3,0.0) {};  
	\draw[redge] (1) to (2);
	\draw[gray!60,densely dashed] (0.3,0.2) circle (0.6cm);
	\draw[very thick,->] (0.3,-0.5) -- (0.3,-0.9);
	\begin{scope}[yshift=-1.8cm]
		\node[smallvertex] (1) at (0, 0) {};
		\node[smallvertex] (2) at (0.35, 0.5) {};
		\node[smallvertex] (3) at (0.7, 0.3) {}; 
		\node[smallvertex] (4) at (0.3,0.0) {}; 
		\node[smallvertex] (w) at (0.65, -0.55) {}; 
		\draw[gray!60,densely dashed] (0.3,0.2) circle (0.6cm);
		\draw[bedge] (1) to (w);
		\draw[bedge] (2) to (w);
		\draw[bedge] (3) to (w);
		\draw[bedge] (4) to (w);
	\end{scope}
	\node (u) at (1.0,0) {};
	\node (u) at (-0.4,0) {};
\end{tikzpicture}
}
\newcommand{\HIIIx}{
\begin{tikzpicture}[scale=0.8]
	\node[smallvertex] (1) at (-0.05, 0.1) {};
	\node[smallvertex] (2) at (0.35, 0.5) {};
	\node[smallvertex] (3) at (0.7, 0.3) {}; 
	\node[smallvertex] (4) at (0.1,-0.2) {}; 
	\node[smallvertex] (5) at (0.0,0.45) {}; 
	\draw[redge] (1) to (3);
	\draw[redge] (2) to (4);
	\draw[gray!60,densely dashed] (0.3,0.2) circle (0.6cm);
	\draw[very thick,->] (0.3,-0.5) -- (0.3,-0.9);
	\begin{scope}[yshift=-1.8cm]
		\node[smallvertex] (1) at (-0.05, 0.1) {};
		\node[smallvertex] (2) at (0.35, 0.5) {};
		\node[smallvertex] (3) at (0.7, 0.3) {}; 
		\node[smallvertex] (4) at (0.1,-0.2) {}; 
		\node[smallvertex] (5) at (0.0,0.45) {}; 
		\node[smallvertex] (w) at (0.7, -0.5) {}; 
		\draw[gray!60,densely dashed] (0.3,0.2) circle (0.6cm);
		\draw[bedge] (1) to (w);
		\draw[bedge] (2) to (w);
		\draw[bedge] (3) to (w);
		\draw[bedge] (4) to (w);
		\draw[bedge] (5) to (w);
	\end{scope}
	\node (u) at (1.0,0) {};
	\node (u) at (-0.4,0) {};
\end{tikzpicture}
}
\newcommand{\HIIIv}{
\begin{tikzpicture}[scale=0.8]
	\node[smallvertex] (1) at (-0.05, 0.1) {};
	\node[smallvertex] (2) at (0.4, 0.55) {};
	\node[smallvertex] (3) at (0.7, 0.3) {}; 
	\node[smallvertex] (4) at (0.1,-0.2) {}; 
	\node[smallvertex] (5) at (0.0,0.45) {}; 
	\draw[redge] (1) to (2);
	\draw[redge] (2) to (4);
	\draw[gray!60,densely dashed] (0.3,0.2) circle (0.6cm);
	\draw[very thick,->] (0.3,-0.5) -- (0.65,-0.9);
	\begin{scope}[xshift=1.4cm]
		\node[smallvertex] (1) at (-0.05, 0.1) {};
		\node[smallvertex] (2) at (0.4, 0.55) {};
		\node[smallvertex] (3) at (0.7, 0.3) {}; 
		\node[smallvertex] (4) at (0.1,-0.2) {}; 
		\node[smallvertex] (5) at (0.0,0.45) {}; 
		\draw[redge] (1) to (5);
		\draw[redge] (5) to (3);
		\draw[gray!60,densely dashed] (0.3,0.2) circle (0.6cm);
		\draw[very thick,->] (0.3,-0.5) -- (-0.05,-0.9);
	\end{scope}
	\begin{scope}[yshift=-1.8cm, xshift=0.7cm]
		\node[smallvertex] (1) at (-0.05, 0.1) {};
		\node[smallvertex] (2) at (0.4, 0.55) {};
		\node[smallvertex] (3) at (0.7, 0.3) {}; 
		\node[smallvertex] (4) at (0.1,-0.2) {}; 
		\node[smallvertex] (5) at (0.0,0.45) {}; 
		\node[smallvertex] (w) at (0.7, -0.5) {}; 
		\draw[gray!60,densely dashed] (0.3,0.2) circle (0.6cm);
		\draw[bedge] (1) to (w);
		\draw[bedge] (2) to (w);
		\draw[bedge] (3) to (w);
		\draw[bedge] (4) to (w);
		\draw[bedge] (5) to (w);
	\end{scope}
\end{tikzpicture}
}
\newcommand{\cubeCoord}{\begin{scope}[xscale=1.2]
				\coordinate (1) at (-0.30,0.275883705367221);
				\coordinate (2) at (0.4,1.0);
				\coordinate (3) at (0.4,0.45);
				\coordinate (4) at (0.51261105436232,0.0) ;
				\coordinate (5) at (-0.3,0.784296289346965);
				\coordinate (6) at (1.2,0.784296289346965);
				\coordinate (7) at (1.2,0.275883705367221);
				\end{scope}}
\title{On the Maximal Number of Real Embeddings of Spatial Minimally Rigid  Graphs}
\author[1,2]{Evangelos Bartzos}
\author[1,2]{Ioannis Emiris}
\author[3]{Jan Legersk\'y}
\author[4]{Elias Tsigaridas}
\affil[1]{Department of Informatics and Telecommunications, National Kapodistrian University of Athens}
\affil[2]{ATHENA Research Center}
\affil[3]{Research Institute for Symbolic Computation,  Johannes Kepler University, Linz}
\affil[4]{Sorbonne Universit{\'e}, \textsc{CNRS}, \textsc{INRIA}, 
Laboratoire d'Informatique de Paris 6 (\textsc{LIP6}), {\'E}quipe \textsc{PolSys}}
\date{}
\begin{document}
	\maketitle
\begin{abstract}
  The number of embeddings of minimally rigid graphs in $\mathbb{R}^D$
  is (by definition) finite, modulo rigid transformations, for every
  generic choice of edge lengths.  Even though various approaches have
  been proposed to compute it, the gap between upper and lower
  bounds is still enormous.  Specific values and its asymptotic
  behavior are  major and fascinating open problems in rigidity theory.
  
  Our work considers the maximal number of real embeddings of
  minimally rigid graphs in $\mathbb{R}^3$.  We modify a commonly used
  parametric semi-algebraic formulation that exploits the
  Cayley-Menger determinant
  to minimize the {\em a priori} number of complex embeddings, where the
  parameters correspond to edge lengths.  To cope
  with the huge dimension of the parameter space and find specializations of
  the parameters that maximize the number of real embeddings, we
  introduce a method based on coupler curves
  that makes the sampling feasible for spatial minimally rigid
  graphs.
  
  Our methodology results in the first full classification of the
  number of real embeddings of graphs with 7 vertices in
  $\mathbb{R}^3$, which was the smallest open case. Building on this and
  certain 8-vertex graphs,
  we improve the previously known general lower bound on the maximum
  number of real embeddings in $\mathbb{R}^3$.
     
\end{abstract}


\section{Introduction}

Rigid graph theory is a very active area of  research with many applications 
in robotics \cite{Rob1,Rob2,Drone}, structural bioinformatics \cite{Em_Ber,Bio2},
sensor network localization \cite{sensor} and architecture~\cite{arch}. 

A graph embedding in $\mathbb{R}^D$, equipped with the standard
euclidean norm, is a function that maps the vertices of a graph $G$ to
$\mathbb{R}^D$. Let $V_G$, resp.\ $E_G$, denote the set of vertices,
resp.\ edges, of $G$.  We are interested in embeddings that are
compatible with edge lengths, namely, if two vertices are connected by
an edge, then the distance between them equals a given length for this
edge.  A graph $G$ is {\em generically rigid} if all embeddings
compatible with generic edge lengths are not continuously deformable.
If any edge removal results in a non-rigid mechanism, then the graph
is {\em minimally rigid}.  For $D=2$ these graphs are called
\emph{Laman graphs}.  For $D=3$, following~\cite{GraKouTsiLower17}, we
call these graphs \emph{Geiringer graphs} to honor Hilda
Pollaczek-Geiringer who worked on rigid graphs in $\RR^2$ and $\RR^3$
many years before Laman~\cite{Geiringer1932,Geiringer1927}.

For a graph $G$, let $r_D(G,\mathbf{d})$ be the number of
embeddings in~$\RR^D$ that are compatible with edge lengths
$\mathbf{d}=(d_e)_{e\in E_G}\in \RR_+^{|E_G|}$ modulo rigid motions,
and let $r_D(G)$ be the maximum of $r_D(G,\mathbf{d})$ over all
$\mathbf{d}$ such that $r_D(G,\mathbf{d})$ is finite.
To indicate the maximum number of real embeddings over all  graphs
with $n$ vertices, we write $r_D(n)$.
In this setting, an important question is to find all the
possible real embeddings of graphs with $k$ (some constant) number of
vertices.
This can be used to enumerate and classify
conformations of proteins, molecules \cite{Em_Ber,Bio2}
and robotic mechanisms, e.g.,  \cite{Drone,Diet}.
Furthermore, precise bounds for $r_D(G)$ or $r_D(k)$
are of great importance,
since gluing many copies of $G$ together yields lower bounds 
for $r_D(n)$, for $n \geq k$, e.g., \cite{Borcea2,GraKouTsiLower17}.

A natural approach to bound $r_D(G)$  is to use an algebraic
formulation to express the embeddings as solutions of a polynomial
system. The number of its complex solutions bounds the
number of complex embeddings, $c_D(G)$, which bounds $r_D(G)$.

For $D=2$, there is a recent algorithm  \cite{Joseph_lam} to solve the problem
of complex embeddings, $c_2(G)$, of minimally rigid graphs in~$\mathbb{C}^2$, for
any given graph $G$.  Besides this graph-specific
approach, using determinantal varieties \cite{Borcea1,Borcea2} we can
estimate asymptotic bounds, see also \cite{Emiris1,Steffens}; this
approach also gives results for $D=3$. Complex bounds for certain
cases of Laman graphs are also given in~\cite{Jackson1}.
For graphs with a constant number of vertices, we know that
${r_2(6) = 24}$, where the proof technique uses the
coupler curve of the Desargues graph~\cite{Borcea2}, and $r_2(7) = 56$, proved by
delicate stochastic methods~\cite{EM}. The second bound yields the
best known lower bound for Laman graphs, which is $r_2(n) \geq 2.3003^n$.

For $D=3$, the problem is much more difficult than in the planar case.
One of the reasons is that, unlike the planar case, we lack a
combinatorial characterization of minimally rigid (Geiringer) graphs
in $\RR^3$. The existence of such a characterization is a major open
problem in rigid graph theory.  The algebraic formulation considers
the squared distance between two points, not as a metric, but as the
sum of squares of the coordinates. Then, for every edge~$v_iv_j$, we have
the equation
\begin{equation*}
d^2_{ij}=(x_{i}-x_{j})^2+(y_{i}-y_{j})^2+(z_{i}-z_{j})^2  \,,
\end{equation*}
where $x_i,y_i,z_i$ are, in general complex, coordinates of a vertex
$v_i$ and $d_{ij}$ is the length of the edge~$v_iv_j$.
If we use the B\'ezout bound to bound the number of the complex roots
of the polynomial system, then the upper bound for $c_3(n)$ is
$\mathcal{O}(2^{3n})$, which is a very loose bound.
Hence, a more sophisticated approach is needed.
Nevertheless, this formulation has been successfully used to obtain upper
bounds of $r_3(k)$ via mixed volume computation of sparse polynomial
systems for 1-skeleta of simplicial polyhedra (a subset of spatial
rigid graphs) with $k \leq 10$ vertices \cite{Emiris1}.
The best known lower bound is 
$r_3(n) \geq 2.51984^n$ \cite{Emiris1}.
We improve it to $2.6553^{n}$.

As our goal is to estimate the number of real embeddings, we are
interested in the number of real solutions of the corresponding
polynomial systems.  If we consider the edge lengths as parameters,
then we are searching for specializations of the parameters that maximize
the number of real solutions of the system and, if possible, to match the
number of complex solutions.  However, the number of parameters is
very big even for graphs with a small number of vertices.  Even more,
it is an open question in real algebraic geometry to determine if the
number of real solutions of a given algebraic system is the same as
the number of complex ones up to its parameters.  While there are some
upper bounds for the number of real positive roots \cite{Sottile},
they are generally worse than mixed volume in the case of rigid
embeddings.  In addition, sparse polynomials have also been used to obtain lower
bounds of the number of real positive roots of polynomial systems, see~\cite{BRS-few-08,bispe}
and references therein.
Therefore, we need a delicate method to sample in an efficient way the parameter space
and maximize the number of real solutions that correspond to embeddings.

For graphs with a given number of vertices, we have a complete
classification for all graphs with $n \leq 6$ vertices.  Moreover, for
the case of the cyclohexane we know the tight bound of $r_3(6) = 16$ embeddings
\cite{Em_Ber}.
Let us also mention that for certain applied cases there are ad
hoc methods.  For example, the maximal number of real embeddings
of Stewart platforms was computed \cite{Diet} using a combination of
Newton-Raphson and the steepest descent method.

\paragraph{Our contribution}
We extend existing results about the number of
the spatial embeddings of minimally rigid graphs.  We construct all
minimally rigid graphs up to 8 vertices and we classify them
according to the last Henneberg step.  Then, we model our problem
algebraically using two different approaches.  Using the algebraic
formulation, we compute upper bounds for the number of complex
embeddings of all graphs with $7$ and $8$ vertices.  Then, we
introduce a method, inspired by coupler curves, to search efficiently
for edge lengths that increase the number of real embeddings.  We
provide an open-source implementation of our method in Python
\cite{sourceCode}, which  
uses PHCpack~\cite{phcpy} for solving polynomial systems. 
To the best of our knowledge there is no other
similar technique, let alone an open-source implementation. 
Based on
our formulation and software, we performed extensive experiments that
resulted in a complete classification and tight bounds for the real
embeddings for all 7-vertex Geiringer graphs, which was the smallest
open case.  Moreover, we extend our computations to certain 8-vertex
graphs. Even though the computations do not provide a full classification of
real embeddings, they are enough to improve the currently known lower bound
on the number of embeddings in $n$, namely $r_3(n) \geq 2.6553^{n}$.

\paragraph{Organization}
The rest of the paper is organized as follows. In Section~\ref{sec:alg-model} we present
the equations and inequalities of our modeling.
In Section~3, we introduce a method for
parametric searching for edge lengths inspired by coupler curves.
In Section~4, we present~$r_3(G)$ for all $G$ with 7 vertices and we
establish a new lower bounds on the maximum number of real
embeddings. Finally, in Section~5 we conclude and present some open
questions.

\section{Preliminaries \& Algebraic Modeling}
\label{sec:alg-model}

First, we present some general results about rigidity in $\RR^3$ and
then two algebraic formulations of the problem of graph embeddings.
The first, in Section~\ref{sec:eq-sphere}, is based on 0-dimensional
varieties of sphere equations.  The second, in
Section~\ref{sec:distance-sys}, exploits determinantal varieties of
Cayley-Menger matrices and inequalities.

\subsection{Rigidity in $\mathbb{R}^3$}
\label{sec:rigidity}
The first step is the construction of all minimally
rigid graphs up to isomorphism for a given number of vertices.
The combinatorial characterization of minimally rigid graphs in
dimension 3 is a major open problem.  It is well known
that ${\lvert E_G \rvert =3\lvert V_G \rvert-6}$, and
$\lvert E_H \rvert \leq 3\lvert V_H \rvert-6$ for every subgraph $H$
of $G$, but this condition is not sufficient for rigidity~\cite{handbook1}.

It is known that adding a new vertex to a Geiringer graph together with adding and removing certain edges
yields another Geiringer graph. These operations are called \emph{Henneberg steps}~\cite{tay}.
Henneberg step~I (H1) adds a 
vertex of degree 3, connecting it with 3 vertices in the original graph.
Henneberg step~II (H2) deletes an edge from the original graph, a new
vertex is connected to the vertices of the deleted edge and to two
other vertices of the graph, see Figure~\ref{fig:henneberg}.
For these two steps, the opposite implication also works:
If the resulting graph is Geiringer, the original one is Geiringer too.
Since the necessary condition for the number of edges guarantees that all Geiringer graphs with $\le 12$ vertices
do not have all vertices of degree greater or equal to 5, 
H1 and H2 are sufficient to construct all Geiringer graphs with $\le 12$ vertices.

There are two additional steps,
the so-called X-replacement and double V-replacement (H3x and H3v).
They extend rigid graphs in~$\mathbb{R}^3$ with a 
vertex of degree 5, see Figure~\ref{fig:henneberg}.
Every minimally rigid graph in $\mathbb{R}^3$ can be constructed by a
sequence of steps H1, H2, H3x or H3v starting from a tetrahedron.
On the other hand, it is not proven whether these moves construct only rigid graphs
\cite{handbook1} (for~dimension 4 there is a counterexample such that
4-dimensional variant of 
H3x gives a non-rigid graph \cite{H3contre}).

\begin{figure}
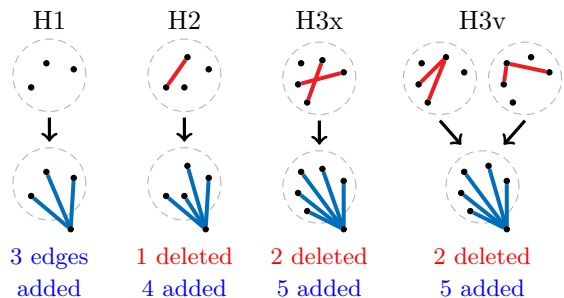
	
	\begin{center}
		\begin{tabular}{cccc}
			H1  & H2 & H3x  & H3v  \\
			\HI	& \HII  	& \HIIIx & \HIIIv \\
			\small{\Blue{3 edges}}& \small{\Red{1 deleted}} &	\small{\Red{2 deleted}} & \small{\Red{2 deleted}} \\	
			\small{\Blue{added}}& \small{\Blue{4 added}} &	\small{\Blue{5 added}} & \small{\Blue{5 added}} \\	
		\end{tabular} 
	\end{center}
	\caption{Henneberg steps in $\mathbb{R}^3$ }
	\label{fig:henneberg}
\end{figure}

\subsection{Equations of spheres}
\label{sec:eq-sphere}

\begin{defn}		\label{def:magnitudeEquations}
	If $G=(V_G,E_G)$ is a graph with edge lengths $\mathbf{d}=(d_e)_{e\in E_G}\in \RR_+^{|E_G|}$ and $v_1,v_2,v_3 \in V_G$ are such that $v_1v_2,v_2v_3,v_1v_3\in E_G$, 
	then $S(G,\mathbf{d},v_1v_2v_3)\subset (\mathbb{C}\times \mathbb{C} \times \mathbb{C})^{|V_G|}$ denotes the zero set of the following equations
	\begin{align*}
	(x_{v_1},y_{v_1},z_{v_1}) = (0,0,0), \, \,
	(x_{v_2},y_{v_2},z_{v_2}) &= (0,d_{v_1v_2},0), \\
	(x_{v_3},y_{v_3},z_{v_3}) = (x_3,y_3,0),\,\,
	x_v^2 + y_v^2 + z_v^2 &= s_v \,\,\forall v \in V_G\,, \\
	s_u +s_v -2(x_u x_v+y_u y_v+z_u z_v) &= d_{uv}^2 \,\, \forall uv \in E_G\,,
	\end{align*}
	where $x_3,y_3$ are such that $x_3\geq 0$, $x_3^2+y_3^2 = d^2_{v_1v_3}$ and $x_3^2+(y_3-d_{v_1v_2})^2 = d^2_{v_2v_3}$.
	We denote the real solutions $S(G,\mathbf{d},v_1v_2v_3)\cap(\RR\times \RR \times \RR)^{|V_G|}$ by  $S_\RR(G,\mathbf{d},v_1v_2v_3)$.
\end{defn}

The first 3 equations remove rotations and translations. 
The distances of vertices from the origin are expressed by new (nonzero) variables
to avoid roots at toric infinity which prohibit 
mixed volume from being tight \cite{Emiris1,Steffens}.
The other equations are distances between embedded points.

Notice that $r_3(G,\mathbf{d})=|S_\RR(G,\mathbf{d},v_1v_2v_3)|$.
If $\mathbf{d}$ is generic, then $c_3(G,\mathbf{d})=|S(G,\mathbf{d},v_1v_2v_3)|=c_3(G)$ since the number of complex embeddings is a generic property.
The mixed volume of the system depends on the choice of the fixed triangle.
Hence, all possible choices must be tested for some graphs in order to get the best possible bound.

\subsection{Distance geometry}
\label{sec:distance-sys}

Distance geometry is the study of the properties of points given only
the distances between them.  A basic tool is
the squared distance matrix, extended by a row and a column of ones
(except for the diagonal), known as Cayley-Menger matrix~\cite[Chapter IV, Section~40]{Blu}:
\begin{equation*}
CM=\begin{pmatrix}
0 & 1 & 1 & \cdots & 1\\ \vspace{-0.3em}
1& 0  & d^2_{12}     & \cdots        & d^2_{1n} \\ \vspace{-0.3em}
1& d^2_{12}  & 0 & \ddots  &   \ldots \\
\cdots & \cdots &  \ddots & \ddots & \ldots   \\
1& d^2_{1n}  & d^2_{2n} & \cdots & 0  
\end{pmatrix}\,,
\end{equation*}
where $d_{ij}$ is the distance between point $i$ and $j$.
The points with such distances are embeddable in $\mathbb{R}^D$ if and only if 
\begin{itemize}
	\item $\text{rank}(CM)=D+2$ and
	\item $(-1)^k \det(CM')\geq 0$, for every submatrix $CM'$ 
	with size $k+1\leq D+2$ that includes the extending row and column.
\end{itemize}
The distances among all $n$ points correspond to edge lengths of the
complete graph with $n$ vertices.  Hence, assuming that lengths of
non-edges of our graph $G$ correspond to variables, the first
condition gives rise to determinantal equations.  This condition
suffices for embeddings in~$\mathbb{C}^D$.  The systems of these
equations are overconstrained (for example $21$ equations in $6$
variables for $n=7$ and 56 equations in $10$ variables for $n=8$).
The second embedding condition can be interpreted by geometrical
constraints on the lengths.  For $k=2$ this means simply that a length
should be positive. For $k=3$ the resulting inequality is the
triangular one, while for $k=4$ we obtain \textit{tetrangular
	inequalities}.  The latter can be seen as a generalization of the
triangular ones, since they state that the area of no triangle is
bigger than the sum of the other three in a tetrahedron.

Although the systems of equations are overconstrained, a square
subsystem can be found.  The question is if these subsystems can give
us information for the whole mechanism.  In \cite{Jackson2}, the
authors present an idea relating Cayley-Menger subsystems with
\textit{globally rigid} graphs.  They are a certain class of graphs
consisting of mechanisms with unique realizations up to rigid motions
and reflections.  If extending $G$ by the edges corresponding to the
variables of the square subsystem yields a globally rigid graph, then
the number of solutions of the reduced system gives an upper bound for
the whole system.  Since the reflections are factored out by the
distance system, the number of solutions is $c_3(G)/2$.  We check
global rigidity using stress matrices derived from rigidity
matroids~\cite{Global}.

It is easy to find square subsystems from the determinantal
equations.  The question is what is the smallest number of variables
needed to establish an upper bound and if this subsystem
captures all solutions of the whole graph.
The following lemma provides an estimate of the number of variables.
\begin{lem}
	For every minimally rigid graph $G$ in dimension 3, there is at least one extended graph $H=G\cup \{e_1, e_2,..,e_k\}$,
	with $k=|V_G|-4$ and $e_i \notin E_G$, which is globally rigid in $\mathbb{C}^3$.
\end{lem}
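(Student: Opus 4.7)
The plan is to prove this by induction on $n = |V_G|$, building $G$ one vertex at a time via the Henneberg construction and updating the globally rigid extension accordingly. For the base case $n=4$, the only minimally rigid graph is the tetrahedron $K_4$; I would take $H = G = K_4$ with $k=0$ and observe that the distance system for $K_4$ has exactly two complex solutions (related by reflection in the fixed triangle), so $K_4$ is globally rigid in $\mathbb{C}^3$.

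For the inductive step, let $G$ be a Geiringer graph on $n+1$ vertices. By Henneberg's theorem $G$ arises from some Geiringer $G_0$ on $n$ vertices via one of H1, H2, H3x, or H3v; let $v$ denote the new vertex and let $H_0 = G_0 \cup \{f_1,\dots,f_{n-4}\}$ be the globally rigid extension given by the inductive hypothesis. For H1, where $v$ is adjacent to $a,b,c \in V_{G_0}$, I would set $H := H_0 \cup \{v\} \cup \{va,vb,vc,vd\}$ for any fourth vertex $d \in V_{G_0}\setminus\{a,b,c\}$; the extras consist of the $n-4$ extras of $H_0$ together with $vd$. For H2, where $v$ is adjacent to $a,b,c,d \in V_{G_0}$ and the edge $ab$ is deleted from $G_0$, I would set $H := H_0 \cup \{v\} \cup \{va,vb,vc,vd\}$; the extras are the $H_0$-extras together with the recovered edge $ab \in E_{H_0}\setminus E_G$. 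In both cases $|E_H \setminus E_G| = (n+1)-4$, and global rigidity of $H$ follows because the four neighbors of $v$ are non-coplanar in the (unique up to reflection) complex realization of $H_0$ for generic edge lengths, so the four distance constraints from $v$ uniquely determine its position.

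For H3x and H3v, where $v$ has five neighbors and two edges $e_1, e_2 \in E_{G_0}$ are removed, I would set $H := (H_0\setminus\{e_1\}) \cup \{v\} \cup (\text{five edges from }v)$; a direct count shows the extras consist of the $n-4$ extras of $H_0$ together with $e_2$, again totaling $(n+1)-4$. The hard part will be establishing global rigidity of $H$ in this case. My plan is to invoke the complex analogue of Hendrickson's theorem on $H_0$ to conclude that $H_0 - e_1$ remains rigid and hence has only finitely many complex realizations, exactly one of which (up to reflection) coincides with the realization of $H_0$ while the others yield different values of $|a_1 b_1|$. Attaching $v$ via its five distance constraints then over-determines the position of $v$: four of the constraints generically fix $v$ uniquely, and the fifth acts as a consistency condition that is satisfied only for the $H_0$-realization of $H_0 - e_1$, ruling out all spurious realizations and leaving $H$ with the desired unique complex realization up to reflection.
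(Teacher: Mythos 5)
Your construction of $H$ and the edge counts are correct in all four Henneberg cases, and your treatment of H1 and H2 is a legitimate --- in fact more elementary --- alternative to the paper's. The paper handles H1 by choosing the fourth neighbour $u$ to be adjacent to one of $v_1,v_2,v_3$, deleting the edge $uv_1$ so that the whole operation becomes an H2 step applied to $H_0$, re-inserting $uv_1$, and citing the theorem that H2 preserves global rigidity; it cites the same theorem for the H2 case. You instead use only the standard fact that attaching a vertex by four edges to affinely independent, generically placed neighbours of a globally rigid graph preserves global rigidity, which also fixes a count the paper leaves implicit in its H2 case (the recovered edge $ab$ supplies the $(n+1)-4$-th extra edge). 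Up to here the proposal is sound.

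The genuine gap is in the H3 case, exactly where you flag ``the hard part.'' The assertion that the fifth distance constraint ``is satisfied only for the $H_0$-realization of $H_0-e_1$'' is not justified. The spurious realizations $q$ of $H_0-e_1$ are determined by the edge lengths, and the five distances from $v$ are determined by $v$'s position; the fifth constraint eliminates $q$ only if the restriction of $q$ to the five neighbours of $v$ is \emph{not} congruent to the restriction of the original realization --- if it is congruent, all five constraints are satisfiable over $q$ and $H$ fails to be globally rigid. Your remark that every spurious $q$ changes $|a_1b_1|$ is the right germ of an argument (both endpoints of $e_1$ are among the five neighbours in H3x and H3v, so congruence on the neighbour set would make $q$ a second realization of the globally rigid $H_0$), but you would still need a genericity argument that non-congruent neighbour configurations yield distinct distance varieties in $\mathbb{C}^5$, so that a generic position of $v$ violates the fifth constraint; over $\mathbb{C}$ this is not automatic. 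The paper closes this in one line: $H$ contains as a spanning subgraph the graph obtained from $H_0$ by an honest H2 step deleting $e_1$ (the new vertex is joined to the endpoints of $e_1$ and two further neighbours), which is globally rigid by the cited H2-preservation theorem, and adding the remaining fifth edge and $e_2$ only preserves this. You should either complete your direct argument along the lines above or reduce H3 to the H2-preservation theorem as the paper does. A secondary issue: the ``complex analogue of Hendrickson's theorem'' you invoke to get rigidity of $H_0-e_1$ also needs a reference valid in the complex/generic setting used here.
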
 
\begin{proof}
	The only 5-vertex minimally rigid graph is obtained by applying an H1 step to the tetrahedron.  
	If we extend this graph with the only non-existing edge,
	we obtain the complete graph in 5 vertices, so the lemma holds. 
	Let the lemma hold for all graphs with $n$ or less vertices. 
	For every graph obtained by an H2 step, the lemma holds since H2 preserves global rigidity \cite{Connelly}. 	
	
	Let a graph $G_{n+1}$ be constructed by an H1 move applied to an $n$-vertex graph $G_n$, whose extended globally rigid graph is $H_n$. 
	Without loss of generality, this move connects a new vertex $v_{n+1}$ with the vertices $v_{1},v_{2},v_{3}$. 
	Let $u$ be a neighbour of $v_1$ in $G_{n+1}$ such that $v_2\neq u\neq v_3$. 
	The edge $uv_1$ exists also in $G_{n}$ and $H_n$. 
	If we set $H'_{n+1}=(H_{n}\cup \{v_1v_{n+1}, v_2v_{n+1}, v_3v_{n+1}, uv_{n+1}\})-\{v_1u\}$, then $H'_{n+1}$ is globally rigid, 
	because it is constructed from $H_n$ by an H2 step.
	Hence, $H_{n+1}=H'_{n+1}\cup  \{ uv_{n+1}\}$ is also globally rigid, proving the statement in the case of H1 steps.
	
	As for H3 steps, both H3x and H3v can be seen as an H2 step followed by an edge deletion. Extending the graph with the second deleted edge preserves global rigidity.
\end{proof}

\begin{figure}[!b]
	\begin{center}
		\begin{tikzpicture}[yscale=0.85, xscale=0.85]
		\begin{scope}		
		\maxSevenVert
		\draw[edge] (2)edge(3)  (3)edge(4) (6)edge(2) (5)edge(4) (6)edge(5);
		\draw[edge] (2)edge(1) (1)edge(4) (1)edge(3) (1)edge(5) (1)edge(6);
		\draw[edge] (2)edge(7) (7)edge(4) (7)edge(3) (7)edge(5) (7)edge(6);
		\draw[rdedge] (2)edge(4) (4)edge(6) (1)edge(7);
		\node[vertex] at (1) {};
		\node[vertex] at (2) {};
		\node[vertex] at (3) {};
		\node[vertex] at (4) {};
		\node[vertex] at (5) {};
		\node[vertex] at (6) {};
		\node[vertex] at (7) {};
		\node[below right=0.08cm] at (1) {$v_1$};
		\node[above left=0.06cm] at (2) {$v_2$};
		\node[below left=0.11cm] at (3) {$v_3$};
		\node[below right=0.13cm] at (4) {$v_4$};
		\node[above right=0.06cm] at (5) {$v_5$};
		\node[above left=0.01cm] at (-0.3, 0.18) {$v_6$};
		\node[above right=0.06cm] at (0,1) {$v_7$};
		\begin{scope}[xshift=5.1cm]
		\maxSevenVert
		\draw[edge] (2)edge(3)  (3)edge(4) (6)edge(2) (5)edge(4) (6)edge(5);
		\draw[edge] (2)edge(1) (1)edge(4) (1)edge(3) (1)edge(5) (1)edge(6);
		\draw[edge] (2)edge(7) (7)edge(4) (7)edge(3) (7)edge(5) (7)edge(6);
		\draw[rdedge] (1)edge(7);
		\node[vertex] at (1) {};
		\node[vertex] at (2) {};
		\node[vertex] at (3) {};
		\node[vertex] at (4) {};
		\node[vertex] at (5) {};
		\node[vertex] at (6) {};
		\node[vertex] at (7) {};
		\node[below right=0.08cm] at (1) {$v_1$};
		\node[above left=0.06cm] at (2) {$v_2$};
		\node[below left=0.11cm] at (3) {$v_3$};
		\node[below right=0.13cm] at (4) {$v_4$};
		\node[above right=0.06cm] at (5) {$v_5$};
		\node[above left=0.01cm] at (-0.3, 0.18) {$v_6$};
		\node[above right=0.06cm] at (7) {$v_7$};
		\node[right] at (0.0,-0.07) {\textcolor{red}{$x_1$}};
		\end{scope}
		\end{scope}
		\end{tikzpicture}
	\end{center}	
	\caption{The graph $G_{48}$ (grey edges). There are submatrices of $CM_{G_{48}}$ that involve only variables corresponding to the 3 red dashed edges of the left graph.
		The graph $G_{48}$ extended by the edge $v_1v_7$ (that corresponds to the variable $x_1$) is globally rigid. 
	}
	\label{fig:G48}
\end{figure}
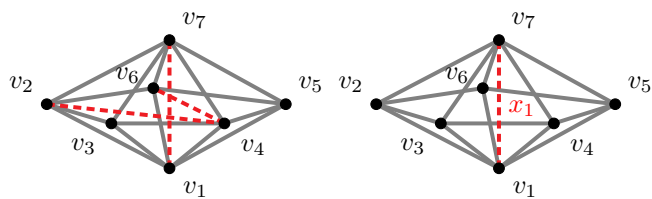

We can extend this result to minimally rigid graphs in arbitrary dimension constructed by appropriate
generalizations of Henneberg steps H1, H2 or H3.  As we mentioned, the lemma gives only an estimate
for the smallest number of variables.  It guarantees neither that
such subsystem exists in every Cayley-Menger matrix of a minimally
rigid graph (in fact we have found graphs with 8 or more vertices with
no such a subsystem), nor that the solutions of the subsystem
totally define the whole system.  On the other hand, if such a subsystem
exists, it can definitely give an upper bound.

An example is the 7-vertex graph
$G_{48}$ with the maximal number of embeddings ($r_3(G_{48})=48=r_3(7)$, see
Section~\ref{sec:results}).  The 
labeling of the vertices is in Figure~\ref{fig:G48}.
There are 5 different square systems in 
$3$ variables that completely define the mechanism.  We can choose one
of them involving only $x_1, x_2, x_3$:
\begin{equation*}
CM_{G_{48}}=\begin{pmatrix} 
0&1&1&1&1&1&1&1  \\ 
1 & 0 & d^2_{12}& d^2_{13}& d^2_{14}&d^2_{15} & d^2_{16} & \Red{x_{1}}		\\ 
1 & d^2_{21} & 0 & d^2_{23} & \Red{x_{2}} & \Red{x_{3}} & d^2_{26} & d^2_{27} \\ 
1& d^2_{31} & d^2_{32} & 0 & d^2_{34} & \Blue{x_{4}} & \Blue{x_{5}}	& d^2_{37} \\ 
1& d^2_{41} & \Red{x_{2}}& d^2_{43} & 0& d^2_{45} & \Blue{x_{6}} & d^2_{47}\\ 
1& d^2_{51} & \Red{x_{3}} & \Blue{x_{4}} & d^2_{54}& 0& d^2_{56}& d^2_{57}		\\
1& d^2_{61}& d^2_{62} & \Blue{x_{5}} & \Blue{x_{6}} & d^2_{65}& 0& d^2_{67}		\\
1&\Red{x_{1}}& d^2_{72} & d^2_{73} & d^2_{74} & d^2_{75} & d^2_{76}& 0
\end{pmatrix}\,.
\end{equation*}
One advantage of this approach is that we have much less equations
compared with the sphere equations approach. In this example, we need
a system of only 3 equations for the distance system, while 16
equations are required otherwise.
Additionally, every solution of the
distance system corresponds to two reflected embeddings.
Hence, polynomial homotopy solvers are much faster in this case.

We can also apply algebraic elimination to reformulate this determinantal variety.
We noticed that even the graph extended only with the edge $v_1v_7$ corresponding to the variables $x_1$ is globally rigid. 
This led us to compute the resultant of the square 3x3 system for $x_1$,
which can be obtained by repeated Sylvester resultants, Macaulay resultant and sparse resultant method with the same result.
In order to specify the realizations, we also need the set of inequalities. There are $35$ triangular inequalities and the same number of tetrangular inequalities for  the whole set of variables.
Since we need to embed only one new edge, we are restricted to find the inequalities for $x_1$.
There are ten inequalities that include only $x_1$ (5 triangular and 5 tetrangular).

On the other hand, we detected graphs for which the subsystems do not fully describe the determinantal variety, since the number of solutions of the whole (overconstrained) system is smaller than this of the (square) subsystem for some generic choices of lengths.  We conclude that the drawback of the method is that there is not a 1-1 correspondence between subsystems and global rigidity.
Despite this fact, they seem better candidates for tight upper bound mixed volume computations.

\section{Increasing the number of real embeddings}
\label{sec:coupler}

To improve $r_3(G)$ bounds, our first goal was to prove that
$r_3(G_{48})=c_3(G_{48})$. Initially we used methods already applied
to increase the number of real solutions of a given polynomial
system. We present a short overview of this approach.
\paragraph{Stochastic methods} 
A first idea was to use stochastic sampling. Generic configurations of~$G_{48}$ embeddings in $\mathbb{R}^3$ were perturbed following the sampling methods presented in \cite{EM}.
Applying this approach, it was straightforward to find configurations with $r_3(G_{48},\mathbf{d})$ being equal to $16,20$ or $24$.
Our best result was $r_3(G_{48},\mathbf{d})=32$.  
\paragraph{Parametric searching with CAD method}
\label{sec:parametric} 
\texttt{Maple}'s subpackage \texttt{RootFinding [Parametric]}
implements Cylindrical Algebraic Decomposition principles for
semi-algebraic sets \cite{parametric}.  This implementation could not work for the
system of sphere equations, but was efficient using the semi-algebraic
distance system.  The algorithm can separate variables and parameters
for every equation and give as output a decomposition of the space
of parameters up to the number of solutions.  In our case, it was
possible to use only one parameter due to computational constraints,
so all the other distances were fixed
(our Maple worksheet is available at \cite{sourceCode}).

It was again straightforward to find $24$ embeddings even from totally
random conformations.  To get more we needed to exploit the
symmetry of $G_{48}$, constructing non-generic flexible frameworks.
Perturbing the lengths by a small quantity, $r_3(G_{48},\mathbf{d})$ was
again finite.
Afterwards, we considered multiple edge lengths as
linear combinations of the same parameter.  Eventually, applying
parameter searching, we were able to find lengths $\mathbf{\bar{d}}$
such that $r_3(G_{48},\mathbf{\bar{d}})=28$:\par\nobreak {\parskip0pt
	\footnotesize \noindent
	\begin{equation}\label{eq:edgelengths28}
	\begin{aligned}
	\bar{d}_{12} &=  1.99993774567597 , & \bar{d}_{27} &=  10.5360917228793 , \\
	\bar{d}_{13} &=  1.99476987780024 , & \bar{d}_{37} &=  10.5363171636461 , \\
	\bar{d}_{14} &=  2.00343646098439 , & \bar{d}_{47} &=  10.5357233031495 , \\
	\bar{d}_{15} &=  2.00289249524296 , & \bar{d}_{57} &=  10.5362736599978 , \\
	\bar{d}_{16} &=  2.00013424746814 , & \bar{d}_{67} &=  10.5364788463527 , \\
	\bar{d}_{23} &=  0.99961432208948, & \bar{d}_{34} &=  1.00368644488060 ,\\
	\bar{d}_{45} &=  1.00153014850485 , & \bar{d}_{56} &=  0.99572361653574 ,\\ 
	\bar{d}_{26} &=  1.00198771097407
	\end{aligned}
	\end{equation}
}

While this result was lower than the one achieved by stochastic searching,
it had some promising properties (variables are taken from the determinantal  variety of $CM_{G_{48}}$).
Namely, all the solutions for $x_1,x_2,x_3$ are real, for $x_1$ even positive, and the $x_1$ solutions which are not embeddable are very close to the intervals imposed by the triangular and tetrangular inequalities. 

\paragraph{Gradient Descent}
An algorithm that increases step by step the number of real embeddings is proposed in \cite{Diet}.
This method is based on gradient descent optimization, minimizing the imaginary part  of solutions, while forcing existing real roots to remain real via a semidefinite relation. 

We applied it to the $G_{48}$ sphere equations and a variant of it for distance equations starting from the optimal configurations found with the two previous approaches. 
In the first iterations the results were encouraging, but finally we could not generate more real embeddings.

The previous results motivated us to search other ways to achieve our
first goal.  Inspired by coupler curve visualization, we introduce an
iterative procedure that modifies edge lengths so that the number of
real embeddings might increase.  In particular, it allows to find edge
lengths to prove that $r_3(G)=c_3(G)$ for $G_{48}$ and also other
7-vertex graphs $G$.  At each iteration, only lengths of 4 edges in a
specific subgraph are changed.  One can be changed freely, whereas the
other 3 are related.  For this two-parametric family, we search values
with the maximal number of embeddings globally.

\subsection{Coupler curve}
For a minimally rigid graph $G$, removing an edge $uc$ yields a framework $H=(V_G,E_G\setminus {uc})$ with one degree of freedom.
If we fix a triangle containing $u$ in order to avoid rotations and translations of~$H$, then the vertex $c$ draws the so called \emph{coupler curve} under all possible motions of $H$.
This idea was already used in~\cite{Borcea2} for obtaining 24 real embeddings of Desargues (3-prism) graph in~$\mathbb{R}^2$.
A modification into $\mathbb{R}^3$ is straightforward 
-- the number of embeddings of $G$ corresponds to the number of intersection of the coupler curve of $c$ of the graph $H$ with a sphere centered at $u$ with radius~$d_{uc}$.
The following definition recalls the concept of coupler curve more precisely.

\begin{defn}		\label{def:couplerCurve}
	Let $H$ be a graph with edge lengths $\mathbf{d}=(d_e)_{e\in E_H}$ and $v_1,v_2,v_3 \in V_H$ be such that $v_1v_2,v_2v_3,v_1v_3\in E_H$. 
	If the set~$S_\RR(H,\mathbf{d},v_1v_2v_3)$ is one dimensional and $c\in V_H$, 
	then the set 
	\begin{equation*}
	\mathcal{C}_{c,\mathbf{d}}=\{(x_c,y_c,z_c) \colon ((x_v,y_v,z_v))_{v\in V_H} \in S_\RR(H,\mathbf{d},v_1v_2v_3)\}	
	\end{equation*}
	is called a \emph{coupler curve of $c$  w.r.t.\ the fixed triangle $v_1v_2v_3$}.
\end{defn}

Obviously, for given lengths $\mathbf{d}$ of the graph $H$, we may vary the length $d_{uc}$ of the removed edge $uc$ 
so that the number of intersections of the coupler curve $\mathcal{C}_{c,\mathbf{d}}$ with the sphere centered at $u$ with radius $d_{uc}$,
i.e., the number of embeddings of $G$, is maximal.
The following lemma enables us to move also the center of the sphere within a certain one-parameter family without changing the coupler curve.

\begin{lem}	\label{lem:couplerCurvePreserves}
	Let $G$ be a minimally rigid graph and let $u,v,w,p,c$ be vertices of $G$ such that $pv,vw \in E$ and the neighbours of $u$ in $G$ are $v,w,p$ and $c$.
	Let $\mathcal{C}_{c,\mathbf{d}}$ be the coupler curve of $c$ of the graph $H=(V_G,E_G\setminus\{uc\})$
	with edge lengths $\mathbf{d}=(d_e)_{e\in E_H}$ w.r.t.\ the fixed triangle $vuw$.
	Let $z_p$ be the altitude of $p$ in the triangle $uvp$ with lengths given by $\mathbf{d}$.
	The set $\{y_p \colon ((x_{v'},y_{v'},z_{v'}))_{v'\in V_H} \in S_\RR(H,\mathbf{d},vuw)\}$ has only one element $y'_p$.
	If the parametric edge lengths $\mathbf{d}'(t)$ are given by 	
	\begin{align*}
	d'_{uw}(t)&=||(x_w,y_w-t,0)||\,, \quad d'_{up}(t)=||(0,y'_p-t,z_p)||\,,	\\
	d'_{uv}(t)&= t\,,  \,\text{ and }  
	d'_{e}(t)=d_{e} \text{ for all } e\in E_H\setminus\{uv,uw,up\}\,,
	\end{align*}
	then the coupler curve $\mathcal{C}_{c,\mathbf{d'}(t)}$ of $c$ w.r.t.\ the fixed triangle $vuw$ is the same for all $t\in\mathbb{R}_+$,
	namely, it is $\mathcal{C}_{c,\mathbf{d}}$. Moreover, if $cw\in E_G$, then $\mathcal{C}_{c,\mathbf{d'}(t)}$ is a spherical curve.
\end{lem}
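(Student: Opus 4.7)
The plan is to produce an explicit bijection $\Phi_t\colon S_\RR(H,\mathbf{d},vuw)\to S_\RR(H,\mathbf{d}'(t),vuw)$ that leaves the image of every vertex unchanged except for $u$, which slides along the $y$-axis from $(0,d_{uv},0)$ to $(0,t,0)$. The geometric point is that, once the triangle $vuw$ is fixed, the line $vu$ is precisely the $y$-axis; consequently any vertex adjacent to both $u$ and $v$ is confined to a circle centered on that axis whose radius equals its altitude in the corresponding triangle. Sliding $u$ along this line therefore leaves such a circle invariant as long as the length $d_{up}$ is updated accordingly, and similarly keeps $w$ at the same point $(x_w,y_w,0)$ as long as $d_{uw}$ is updated accordingly.

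I would first verify that the prescribed $d'_{uv}(t),d'_{uw}(t),d'_{up}(t)$ are exactly these consistent updates. For $w$, equating $x_w^2+y_w^2=d_{vw}^2$ with the new triangle-fixing condition $x_w^2+(y_w-t)^2=d'_{uw}(t)^2$ and using the definition of $d'_{uw}(t)$ recovers the original $(x_w,y_w)$ with $x_w\geq 0$. For $p$, combining the equations coming from the edges $vp$ and $up$ yields the unique value $y'_p=(d_{uv}^2+d_{vp}^2-d_{up}^2)/(2d_{uv})$, which accounts for the uniqueness statement in the lemma; it then forces the sum of the squared $x$- and $z$-coordinates of $p$ to equal $z_p^2$, the squared altitude of $\triangle uvp$. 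The formula $d'_{up}(t)=\sqrt{(y'_p-t)^2+z_p^2}$ is then exactly the distance from $(0,t,0)$ to any point on that circle.

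With this in hand, define $\Phi_t(\varphi)(v')=\varphi(v')$ for $v'\neq u$ and $\Phi_t(\varphi)(u)=(0,t,0)$. Edges not incident to $u$ are satisfied automatically since their endpoints and lengths are unchanged; the three incident edges $uv,uw,up$ are satisfied by the preceding calculations. Moving $u$ back to $(0,d_{uv},0)$ provides the inverse, so $\Phi_t$ is a bijection, and since the coordinates of $c$ are never modified the images of $c$ coincide, giving $\mathcal{C}_{c,\mathbf{d}'(t)}=\mathcal{C}_{c,\mathbf{d}}$. For the final claim, if $cw\in E_G$ then $cw\in E_H$, the length $d_{cw}$ is not among the three modified ones, and $w$'s position remains $(x_w,y_w,0)$ independently of $t$; hence $c$ always lies on the sphere of radius $d_{cw}$ centered at $(x_w,y_w,0)$, so $\mathcal{C}_{c,\mathbf{d}}$ is a spherical curve. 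I do not foresee any real obstacle: the argument is essentially guided book-keeping, and the only mild subtlety is keeping the lemma's symbol $z_p$, which denotes an altitude, cleanly separate from the actual $z$-coordinate of $p$ in the embedding.
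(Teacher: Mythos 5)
Your proof is correct and takes essentially the same route as the paper's: both rest on the observation that, once $d_{uv},d_{uw},d_{up}$ are updated consistently with sliding $u$ along the $y$-axis, the positions of all other vertices (in particular $v$, $w$, $p$ and $c$) are unaffected, so the coupler curve of $c$ is unchanged. Your write-up is in fact more explicit than the paper's proof, which leaves the verification of the length formulas and the spherical-curve claim to the reader.
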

\begin{proof}
	Within this proof, all coupler curves are w.r.t.\ the triangle $vuw$.
	The situation is illustrated by Figure~\ref{fig:couplerCurvePreserved}. 
	Since $G$ is minimally rigid, removing the edge $uc$ yields a graph $H$ such that $S_\RR(H,\mathbf{d},vuw)$ is one dimensional for a generic choice of $\mathbf{d}$.
	The set $\{y_p \colon ((x_{v'},y_{v'},z_{v'}))_{v'\in V_H} \in S_\RR(H,\mathbf{d},vuw)\}$ has indeed only one element, 
	since the coupler curve $\mathcal{C}_{p,\mathbf{d}}$ of $p$ is a circle whose axis of symmetry is the $y$-axis. 
	The parametric edge lengths  $\mathbf{d}'(t)$ are such that the position of $v$ and $w$ is the same for all $t$.
	Moreover, the coupler curve $\mathcal{C}_{p,\mathbf{d'}(t)}$ of $p$ is independent on $t$. 
	Hence, the coupler curve $\mathcal{C}_{c,\mathbf{d'}(t)}$ is independent on $t$, because the only vertices adjacent to $u$ in $H$ are $p,v$ and $w$,
	i.e., the position of $u$ does not influence positions of the other vertices.
\end{proof}

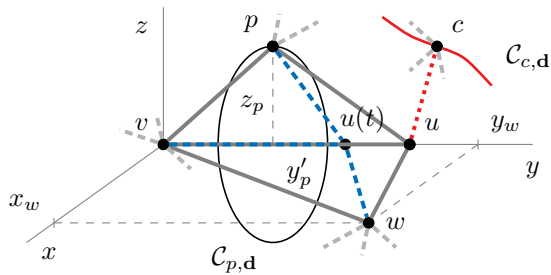
\begin{figure}[!htb]
	\begin{center}
		\begin{tikzpicture}[xscale=1.8,yscale=1.3]
		\draw[gray] (0,0) to (0,1.4) {};
		\draw[gray] (0,0) to (2.7,0) {};
		\draw[gray] (0,0) to (-1,-1) {};
		
		\node[below right=0.1cm] at (-1.0,-0.9) {$x$};
		\node[below=0.1cm] at (2.7,0) {$y$};
		\node[below left=0.1cm] at (0,1.4) {$z$};
		
		\node[vertex] (v) at (0,0) {};
		\node[vertex] (u) at (1.8,0) {};
		\node[vertex] (w) at (1.5,-0.8) {};
		\node[vertex] (p) at (0.8,1) {};
		\node[vertex] (ut) at (1.33,0) {};
		\node[vertex] (c) at (2,1) {};
		\coordinate (s) at (0.8,0);
		
		\draw[black,line width=0.6pt] (s) ellipse (0.4cm and 1cm);		
		
		\draw[gray, dashed]	(s) to (p);	
		\node[below left=0.1cm] at 	(0.8,-0.9) {$\mathcal{C}_{p,\mathbf{d}}$};
		
		\draw[gray, dashed]	(2.3,0) to (w);
		\draw[gray, dashed]	(-0.8,-0.8) to (w);
		\draw[gray] (2.3,-0.05) to (2.3,0.05);
		\draw[gray] (-0.8,-0.85) to (-0.8,-0.75);
		\node[above left=0.06cm] at (-0.8,-0.8) {$x_w$};
		\node[above right=0.06cm] at (2.3,0) {$y_w$};
		
		\draw[edge] (p) to (v);
		\draw[edge] (v) to (u);
		\draw[edge] (v) to (w);		
		\draw[edge] (u) to (w);
		\draw[edge] (p) to (u);	
		\draw[bdedge] (ut) to (w);
		\draw[bdedge] (p) to (ut);
		\draw[bdedge] (v) to (ut);
		\draw[rdedge, dotted] (u) to (c);	
		
		\draw[line width=1.0pt,Red] (c) .. controls +(-0.2,0.1) .. (1.6,1.3);	
		\draw[line width=1.0pt,Red] (c) .. controls +(0.2,-0.1) .. (2.4,0.6);			
		\node[above right=0.1cm] at (2.4,0.6) {$\mathcal{C}_{c,\mathbf{d}}$};

		\node[vertex] at (u) {};
		\node[vertex] at (p) {};		
		
		\node[above left=0.1cm] at (v) {$v$};
		\node[above right=0.1cm] at (u) {$u$};
		\node[above right=0.05cm] at (ut) {$\!\!\!\!u(t)$};
		\node[right=0.1cm] at (w) {$w$};
		\node[above left=0.1cm] at (p) {$p$};
		\node[below right=0.08cm] at (s) {$y'_p$};
		\node[above right=0.1cm] at (c) {$c$};
		\node at (0.65,0.4) {$z_p$};
		\draw[edgeq] (p) -- +(0.33,0.25);
		\draw[edgeq] (p) -- +(0.05,0.33);
		\draw[edgeq] (w) -- +(-0.33,-0.25);
		\draw[edgeq] (w) -- +(-0.05,-0.33);
		\draw[edgeq] (w) -- +(0.2,-0.25);
		\draw[edgeq] (c) -- +(-0.2,0.25);
		\draw[edgeq] (c) -- +(0.05,-0.33);
		\draw[edgeq] (c) -- +(-0.2,-0.25);
		\draw[edgeq] (v) -- +(-0.32,0.15);
		\draw[edgeq] (v) -- +(-0.05,0.3);
		\draw[edgeq] (v) -- +(0.2,-0.25);
		\end{tikzpicture}
	\end{center}
	\caption{Since the lengths of $up$ and $uw$ are changed accordingly to the length of $uv$ (blued dashed edges),
		the coupler curves  $\mathcal{C}_{p,\mathbf{d'}(t)}$ and  $\mathcal{C}_{c,\mathbf{d'}(t)}$ are independent on $t$.
		The red dashed edge  $uc$ is removed from $G$.}
	\label{fig:couplerCurvePreserved}
\end{figure}

Thus, for every subgraph of $G$ given by vertices $u,v,w,p,c$ such that $pv,vw \in E$ and the neighbours of $u$ in $G$ are $v,w,p$ and $c$,
we have a two-parametric family of lengths $\mathbf{d}(t,r)$
such that the coupler curve $\mathcal{C}_{c,\mathbf{d}(t,r)}$ w.r.t.\ the fixed triangle $vuw$ is the same for all $t$ and $r$,
where the parameter $t$ determines lengths of $uv,uw$ and~$up$, and the parameter $r$ represents the length of $uc$. 
Within this family, we look for values of $t$ and $r$ that maximize the number of embeddings.

We illustrate the method on the example of $G_{48}$. Let $\mathbf{\bar{d}}$ be edge lengths given by~\eqref{eq:edgelengths28}.
We developed a program \cite{sourceCode} that plots (using \verb+Matplotlib+~\cite{Matplotlib}) the coupler curve of the vertex $v_6$ of $G$ with the edge $v_2v_6$ removed w.r.t.\ the fixed triangle $v_1v_2v_3$.
Figure~\ref{fig:couplercurve} is created by this program.
There are 28 embeddings for $\mathbf{\bar{d}}$, but we can find position and radius of the sphere corresponding to the removed edge $v_2v_6$ such that there are 32 embeddings
by using Lemma~\ref{lem:couplerCurvePreserves} for the subgraph $(u,v,w,p,c)=(v_2, v_3, v_1, v_7, v_6)$.
This is obtained by setting 
\begin{equation}
\label{eq:lenghtsWith32}
d_{12}=4.0534,\,\, d_{27}=11.1069,\,\, d_{26}=3.8545,\,\, d_{23}=4.0519\,.
\end{equation}

\begin{figure}
	\begin{center}
		\includegraphics[width=0.31\textwidth]{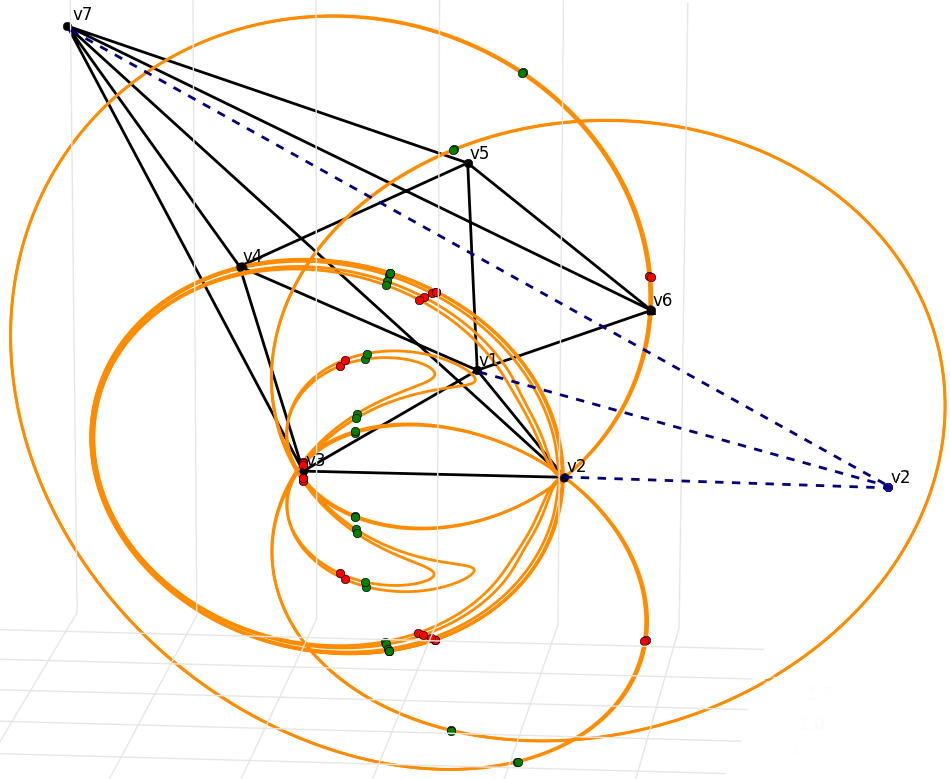}
	\end{center}
	\caption{Coupler curve $\mathcal{C}_{v_6,\mathbf{\bar{d}}}$ of $G_{48}$ with the edge $v_2v_6$ removed.
		The 28 red points are intersections of $\mathcal{C}_{v_6,\mathbf{\bar{d}}}$ with the sphere centered at $v_2$ with edge lengths $\mathbf{\bar{d}}$,
		whereas the 32 green ones are for edge lengths given by equation~\eqref{eq:lenghtsWith32} (illustrated by blue dashed lines). }
	\label{fig:couplercurve}
\end{figure}

\subsection{Sampling}
Although edge lengths of $G_{48}$ with 48 real embeddings can be
obtained by manual application of
Lemma~\ref{lem:couplerCurvePreserves} based on plots of coupler
curves, we also implemented a program~\cite{sourceCode} that searches
for a good position and radius of the sphere by sampling the parameters. 
The method and its implementation work also for minimally rigid graphs other than $G_{48}$.

We assume that the edge $cw$ is present for a suitable subgraph
(actually, this is the case for all suitable subgraphs of $G_{48}$). 
Thus, the coupler curve is spherical and 
the intersections of the coupler curve with the sphere representing the removed edge $uc$ lies on the intersection of these two spheres,
which is a circle.
Hence, instead of sampling $t$ and $r$, we sample circles on the sphere containing the coupler curve.

Since the sphere of the coupler curve is centered at $w$ and the intersecting sphere has center at~$u$,
the center of the intersection circle is on the line $uw$ and the plane of the circle is perpendicular to this line.
Hence, the circle is determined by the angle $\varphi\in(-\pi/2,\pi/2)$ between the altitude of $w$ in the triangle $uvw$ and the line $uw$,
and by the angle $\theta\in(0,\pi)$ between $uw$ and $cw$, see Figure~\ref{fig:phiTheta}.
Thus, we sample $\varphi$ and $\theta$ in their intervals, compute $t$ and $r$ from their values and select edge lengths with the highest number of real embeddings.
The algebraic systems are solved  by polynomial homotopy continuation using the Python package \verb+phcpy+ \cite{phcpy}.
In \verb+phcpy+, one can specify a starting system with the set of its solutions
instead of letting the program to construct it. 
Since the parameters change only slightly during the sampling, 
tracking the solutions of a new system from the solutions of the previous one is significantly faster 
than solving from scratch.

\begin{figure}[!htb]
	\begin{center}
		\begin{tikzpicture}[scale=1.2]
		\clip (-1.5,-1.4) rectangle (4.0,1.3);
		\draw[gray] (0,0) to (0,1.3) {};
		\draw[gray] (0,0) to (3,0) {};
		\draw[gray] (0,0) to (-1.1,-1.1) {};
		
		\node[left=0.1cm] at (-1,-1) {$x$};
		\node[below right=0.1cm] at (3,0) {$y$};
		\node[below left=0.1cm] at (0,1.3) {$z$};
		
		\node[vertex] (v) at (0,0) {};
		\node[vertex] (u) at (2.5,0) {};
		\node[vertex] (w) at (0.5,-0.8) {};
		
		\node[vertex] (c) at (1.92,-0.68) {};
		
		\coordinate (f) at (1.253,0);
		\draw[black]	(f) to (w);
		\draw[gray, dashed]	(-0.8,-0.8) to (w);
		\draw[gray] (1.3,-0.05) to (1.3,0.05);
		\draw[gray] (-0.8,-0.85) to (-0.8,-0.75);											
		
		\draw[edge] (v) to (u);
		\draw[edge] (v) to (w);		
		\draw[edge] (u) to (w);
		\draw[edge] (c) to (w);
		
		\draw[rdedge] (u) to (c);

		\node[above left=0.1cm] at (v) {$v$};
		\node[above right=0.1cm] at (u) {$u$};
		\node[below left=0.1cm] at (w) {$w$};
		\node[right=0.13cm] at (c) {$c$};
		
		\shade[ball color = gray!30, opacity = 0.3] (w) circle (1.5cm);
		\shade[ball color = red!30, opacity = 0.3] (u) circle (1.2cm);
		\tkzInterCC[R](w,1.5 cm)(u,1.2 cm) \tkzGetPoints{A}{B} 		
		
		\tkzDefPoint(3,0.2){Od}
		\tkzDrawArc[color=blue, dashed,line width=0.8pt](Od,A)(B)
		\tkzDefPoint(-0.5,-1.2){Of}
		\tkzDrawArc[color=blue,line width=0.8pt](Of,B)(A)
		
		\tkzDrawArc[color=black](u,B)(A)
		\tkzDrawArc[color=black,dashed](u,A)(B)
		\tkzDrawArc[color=black,dashed](w,B)(A)
		\tkzDrawArc[color=black](w,A)(B)
		
		\coordinate (fc) at ($(A)!0.5!(B)$);
		\draw (fc) to (c);
		
		\node[vertex] at (c) {};
		
		\tkzMarkAngle[color=black,size=0.3](v,f,w)
		\tkzLabelAngle[color=black,pos=-0.2](w,f,v){$\cdot$}
		
		\tkzMarkAngle[color=black,size=0.15](w,fc,c)
		\tkzLabelAngle[color=black,pos=0.1](w,fc,c){$\cdot$}

		\tkzMarkAngle[size=0.5,line width=0.7pt](u,w,f)
		\tkzLabelAngle[pos=0.6](u,w,f){$\varphi$}		
		\tkzMarkAngle[size=0.7,line width=0.7pt](c,w,u)
		\tkzLabelAngle[pos=0.8](c,w,u){$\theta$}
		\end{tikzpicture}
	\end{center}
	\caption{For fixed position of $v$ and $w$, the angle $\varphi$ determines the position of $u$, since $u$ lies on the $y$-axis. 
		If also the length of $cw$ is given, then $\theta$ determines the length of $uc$. The intersection circle is blue.}
	\label{fig:phiTheta}
\end{figure}
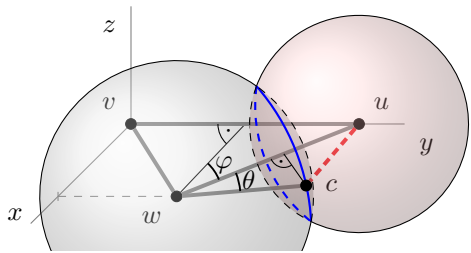

\subsection{More subgraphs suitable for sampling}
Usually, one iteration of the sampling produces many edge lengths with the same number of real embeddings.
If this number is not the desired one, then we need to pick starting edge lengths for the next iteration with a different subgraph suitable for sampling.
Our heuristic choice is based on clustering of pairs $(\varphi,\theta)$ using the function \verb+DBSCAN+ from the \verb+sklearn+ Python package~\cite{sklearn}.
From each cluster, we pick the center of gravity as $(\varphi,\theta)$ for the output lengths,
or the pair $(\varphi,\theta)$ closest to this center if the edge lengths corresponding to the center have less real embeddings.

We tested two approaches in sampling for subgraphs:
\begin{enumerate}
	\item \emph{Tree search} -- we apply the procedure using all suitable subgraphs for given starting lengths 
	and then we do the same recursively for all outputs whose number of embeddings increased until the required number is reached (or there are no increments).
	We trace the state tree depth-first.
	\item \emph{Linear search} -- we order all suitable subgraphs and an output from the procedure applied to starting lengths with the first subgraph
	is used as input for the procedure with the second subgraph, etc. 
	There is also branching because of multiple clusters -- we test all of them in depth-first way.
\end{enumerate}

\section{Classification and Lower Bounds}
\label{sec:results}

Henneberg steps may result in isomorphic graphs either
constructed by the same H-step or by another one.  We recall that no
H3x or H3v step is needed for 7 and 8-vertex graphs.  We classify
each graph up to isomorphism by the sequence of Henneberg steps
needed for its construction.  We use a certain hierarchy for this
classification: on the one hand there are graphs that can be constructed
by an H1 move in the last step, while for the others H2 is needed.
This process is important, since H1 steps trivially
double the number of real embeddings as the new vertex lies in the
intersection of 3 spheres.  This means that the number of embeddings
for H1 graphs is already known, assuming that we know the number
for the parent graph. Our \texttt{MATLAB} and \texttt{SageMath}
implementations, which verify each other,
were used to apply Henneberg steps and remove isomorphisms (see \cite{sourceCode}).
This is not a computationally difficult task for $n=7$ or $8$.
We remark that this is also done in \cite{GraKouTsiLower17} up to $10$ vertices.

The first estimate of $c_3(G)$ is the mixed volume of the algebraic
systems.  Let $f$ be a square polynomial system in $m$ variables.  The
convex hull of the exponents vector of each polynomial is its Newton
polytope.  The mixed volume of the polytope bounds
the number of solutions and is tight generically
in $(\mathbb{C}^{*})^m$.  We computed the mixed
volume for both sphere and distance equations.  We solved
the systems for random edge lengths and checked whether the mixed volume
bound was tight in all cases.  Finally, we used the method
in Section~\ref{sec:coupler} to find parameters 
maximizing the number of real embeddings.  

\subsection{7-vertex graphs}
For $n=6$, there are three H1 graphs and
one obtained with an H2 step -- the cyclohexane $G_{16}$.
The number of real embeddings of the H1 graphs is $8$, while it is known that $r_3(G_{16})=16$ \cite{Em_Ber}. 
One can also obtain lengths $\mathbf{d}$ such that $r_3(G_{16},\mathbf{d})=16$ by our method within a few tries with random starting lengths.
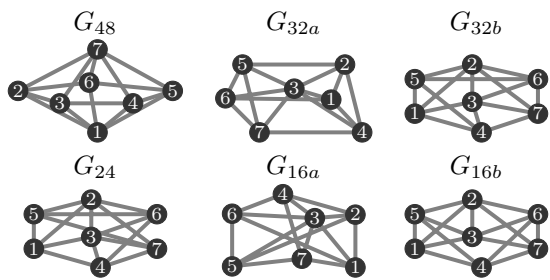
\begin{figure}[!htb]
	\begin{center}
		\begin{tabular}{ccc}
			$G_{48}$ & $G_{32a}$ & $G_{32b}$ \\
			\begin{tikzpicture}[scale=0.55]
			\coordinate(1) at (0, -1);
			\coordinate (2) at (-1.9, 0);
			\coordinate (3) at (-0.9, -0.3) ;
			\coordinate (4) at (0.85, -0.3) ;
			\coordinate (5) at (1.8,0.0) ;
			\coordinate (6) at (-0.2, 0.2) ;
			\coordinate (7) at (0,1) ;
			\draw[edge] (2)edge(3)  (3)edge(4) (6)edge(2) (5)edge(4) (6)edge(5);
			\draw[edge] (2)edge(1) (1)edge(4) (1)edge(3) (1)edge(5) (1)edge(6);
			\draw[edge] (2)edge(7) (7)edge(4) (7)edge(3) (7)edge(5) (7)edge(6);
			\node[lnode] at (1) {1};
			\node[lnode] at (2) {2};
			\node[lnode] at (3) {3};
			\node[lnode] at (4) {4};
			\node[lnode] at (5) {5};
			\node[lnode] at (6) {6};
			\node[lnode] at (7) {7};
			\end{tikzpicture}
			& 
			\begin{tikzpicture}[scale=0.9]
			\coordinate (1) at (2.073556820291433,0.55260245625217);
			\coordinate (2) at (2.27496378692584,1.069420195538877);
			\coordinate (3) at (1.53393815496906,0.71405847586394);
			\coordinate (4) at (2.525772462357367,0.069985890888852);
			\coordinate (5) at (0.777711997228549,1.069420195538877);
			\coordinate (6) at (0.526903321797022,0.57160310843183);
			\coordinate (7) at (1.024720541214144,0.069985890888852);
			\draw[edge] (4) to (7)  (1) to (3)  (5) to (6)  (1) to (4)  (2) to (3)  (3) to (7)  (2) to (5)  (3) to (5);
			\draw[edge] (1) to (2)  (6) to (7)  (5) to (7)  (3) to (6)  (1) to (6)  (3) to (4)  (2) to (4) ;
			\node[lnode] at (1) {1};
			\node[lnode] at (2) {2};
			\node[lnode] at (3) {3};
			\node[lnode] at (4) {4};
			\node[lnode] at (5) {5};
			\node[lnode] at (6) {6};
			\node[lnode] at (7) {7};
			\end{tikzpicture} &	
			\begin{tikzpicture}[scale=0.9]
			\cubeCoord
			\draw[edge] (1) to (2)  (4) to (7)  (2) to (6)  (4) to (5)  (1) to (4)  (5) to (6)  (1) to (3)  (2) to (3);
			\draw[edge] (3) to (7)  (2) to (5)  (2) to (7)  (6) to (7)  (1) to (5)  (3) to (6)  (3) to (4);
			\node[lnode] at (1) {1};
			\node[lnode] at (2) {2};
			\node[lnode] at (3) {3};
			\node[lnode] at (4) {4};
			\node[lnode] at (5) {5};
			\node[lnode] at (6) {6};
			\node[lnode] at (7) {7};
			\end{tikzpicture}\\
			$G_{24}$ &  $G_{16a}$ &$G_{16b}$ \\
			\begin{tikzpicture}[scale=0.9]
			\cubeCoord
			\draw[edge] (1) to (2)  (4) to (7)  (2) to (6)  (5) to (6)  (1) to (4)  (1) to (3)  (2) to (3)  (3) to (7);
			\draw[edge] (2) to (5)  (2) to (7)  (4) to (6)  (5) to (7)  (1) to (5)  (3) to (6)  (3) to (4) ;
			\node[lnode] at (1) {1};
			\node[lnode] at (2) {2};
			\node[lnode] at (3) {3};
			\node[lnode] at (4) {4};
			\node[lnode] at (5) {5};
			\node[lnode] at (6) {6};
			\node[lnode] at (7) {7};
			\end{tikzpicture}& 
			\begin{tikzpicture}[xscale=1.7,yscale=1.4]
			\coordinate (1) at (1.2140625924293047,0.2672685997160432);
			\coordinate (2) at (1.2140625924293047,0.7672985979628816);
			\coordinate (3) at (0.9,0.734059609640115);
			\coordinate (4) at (0.6533351621279061,0.9609518342781311);
			\coordinate (5) at (0.26024789140115234,0.277384813553407);
			\coordinate (6) at (0.26024789140115234,0.7759696383949077);
			\coordinate (7) at (0.8,0.35);
			\draw[edge] (4) to (7)  (1) to (3)  (5) to (6)  (1) to (6)  (3) to (7)  (2) to (5)  (3) to (5)   ;
			\draw[edge] (1) to (2)  (4) to (6)  (5) to (7)  (3) to (6)  (1) to (7)  (2) to (3)  (3) to (4)  (2) to (4) ;
			\node[lnode] at (1) {1};
			\node[lnode] at (2) {2};
			\node[lnode] at (3) {3};
			\node[lnode] at (4) {4};
			\node[lnode] at (5) {5};
			\node[lnode] at (6) {6};
			\node[lnode] at (7) {7};
			\end{tikzpicture}& 
			\begin{tikzpicture}[scale=0.9]
			\cubeCoord
			\draw[edge] (1) to (2)  (4) to (7)  (2) to (6)  (4) to (5)  (1) to (4)  (1) to (3)  (2) to (3)  (3) to (7)  ;
			\draw[edge] (2) to (5)  (3) to (5)  (2) to (7)  (6) to (7)  (4) to (6)  (1) to (5)  (3) to (6)   ;
			\node[lnode] at (1) {1};
			\node[lnode] at (2) {2};
			\node[lnode] at (3) {3};
			\node[lnode] at (4) {4};
			\node[lnode] at (5) {5};
			\node[lnode] at (6) {6};
			\node[lnode] at (7) {7};
			\end{tikzpicture}\\ 
		\end{tabular}
	\end{center}
	\caption{All 7-vertex graphs constructed only by an H2 move in the last step.}
	\label{fig:7vertexH2}
\end{figure}

Using Henneberg steps, we tackle the case $n=7$. There are $18$ graphs constructed using a sequence of only H1 steps, 
while two are obtained if we apply H1 to $G_{16}$. Hence, the number of real embeddings is 16, resp.\ 32, by the doubling argument.
Moreover, there are 6 graphs obtained by H2 on a $6$-vertex graph, see Figure~\ref{fig:7vertexH2}.
See~\cite{sourceCode} for the full list.

The results (mixed volume for both systems, numbers of complex and real embeddings) for these 6 graphs are in Table~\ref{tab:sevenVertResults}. 
These results give a full classification of the embeddings of all 7-vertex minimally rigid graphs in $\mathbb{R}^3$.
We present edge lengths for all these graphs proving that all embeddings can be real, i.e., $r_3(G)=c_3(G)$.

\begin{table}[!htb]
	\begin{center}
		\begin{tabular}{ccccccc}
			\textbf{Graph} & $G_{48}$ & $G_{32a}$ & $G_{32b}$ & $G_{24}$ & $G_{16a}$ & $G_{16b}$ \\ \hline
			\rule{0cm}{1.em}MV sphere eq. & 48 & 32 & 32  & 32  & 32 & 32 \\ 
			MV dist. subsyst. & 48 & 32 & 32  & 24  & 24 & 16 \\ 
			$c_3(G)$& 48 & 32 & 32  & 24  & 16 & 16 \\
			$r_3(G)$ & 48 & 32 & 32  & 24  & 16 & 16 \\
		\end{tabular}
	\end{center}
	\caption{Mixed volume (MV) and number of solutions for 7-vertex graphs constructed only by H2 in the last step.}
	\label{tab:sevenVertResults}
\end{table}

There are 20 subgraphs of $G_{48}$ given by vertices $(u,v,w,p,c)$ satisfying the assumption in Lemma~\ref{lem:couplerCurvePreserves},
that is, they are suitable for the sampling procedure.	
Using tree search approach, we obtained $\mathbf{d}$ such that $r_3(G_{48},\mathbf{d})=48$  in only 3 steps
(starting from~$\mathbf{\bar{d}}$ and using subgraphs $(v_5, v_6, v_1, v_7, v_4)$, $(v_4, v_3, v_1, v_7, v_5)$ and $(v_3, v_2, v_1,v_ 7, v_4)$):\par\nobreak
{\parskip0pt \footnotesize \noindent
	\begin{align*}
	d_{12} &= 1.9999 , & d_{16} &= 2.0001 , & d_{45} &= 7.0744 , & d_{47} &= 11.8471 ,  \\
	d_{13} &= 1.9342 , & d_{26} &= 1.0020 , & d_{56} &= 4.4449 , & d_{57} &= 11.2396 ,  \\
	d_{14} &= 5.7963 , & d_{23} &= 0.5500 , & d_{27} &= 10.5361 , & d_{67} &= 10.5365\,. \\
	d_{15} &= 4.4024 , & d_{34} &= 5.4247 , & d_{37} &= 10.5245 , 
	\end{align*}
}
For other graphs constructed only by an H2 move in the last step we used various starting lengths, 
we just list the edge lengths that give the appropriate maximal number of real embeddings:\par\nobreak
{\parskip0pt \footnotesize \noindent
	\begin{align*} 
	\rule{0em}{1.05em} G_{16a}: & &
	d_{13} &= 5.75, & d_{56} &= 7.90, & d_{16} &= 8.48, \\
	d_{37} &= 5.91, & d_{25} &= 7.15, & d_{35} &= 5.09, & d_{12} &= 4.36, \\
	d_{46} &= 8.78, & d_{57} &= 10.22, & d_{36} &= 7.06, & d_{17} &= 3.77, \\
	d_{47} &= 7.19, & d_{23} &= 3.81, & d_{34} &= 3.23, & d_{24} &= 6.05 \,.\\
	\rule{0em}{1.05em} G_{16b}:& &
	d_{47} &= 4.46, & d_{26} &= 7.47, & d_{45} &= 7.72, \\
	d_{14} &= 6.51, & d_{13} &= 3.53, & d_{23} &= 7.69, & d_{37} &= 5.76, \\
	d_{25} &= 9.48, & d_{35} &= 6.10, & d_{12} &= 4.62, & d_{67} &= 3.09, \\
	d_{27} &= 5.90, & d_{46} &= 7.07, & d_{15} &= 5.69, & d_{36} &= 6.43 \,.\\
	\end{align*}
	\begin{align*}
	\rule{0em}{1.05em} G_{24}:& &
	d_{47} &= 5.65, & d_{26} &= 5.70, & d_{56} &= 4.70, \\
	d_{14} &= 8.33, & d_{13} &= 4.77, & d_{23} &= 10.31, & d_{37} &= 7.10, \\
	d_{25} &= 9.32, & d_{12} &= 11.05, & d_{46} &= 6.49, & d_{57} &= 5.77, \\
	d_{27} &= 6.00, & d_{15} &= 9.40, & d_{36} &= 8.57, & d_{34} &= 7.64 \,.\\
	\rule{0em}{1.05em} G_{32a}:& &
	d_{13} &= 6.27, & d_{56} &= 9.23, & d_{14} &= 8.06, \\
	d_{23} &= 8.83, & d_{37} &= 5.62, & d_{25} &= 9.74, & d_{35} &= 5.60, \\
	d_{12} &= 10.95, & d_{67} &= 9.28, & d_{57} &= 7.88, & d_{36} &= 8.26, \\
	d_{47} &= 8.74, & d_{16} &= 11.56, & d_{34} &= 6.11, & d_{24} &= 8.95 \,. \\
	\rule{0em}{1.05em} G_{32b}:& &d_{47} &= 85.49, & d_{26} &= 7.11, & d_{56} &= 22.08, \\
	d_{14} &= 87.33, & d_{13} &= 10.81, & d_{23} &= 4.47, & d_{37} &= 7.10, \\
	d_{25} &= 20.70, & d_{12} &= 11.06, & d_{67} &= 9.29, & d_{15} &= 21.49, \\
	d_{27} &= 7.68, & d_{45} &= 78.53, & d_{36} &= 7.53, & d_{34} &= 84.17 \,.
	\end{align*}
}

\subsection{8-vertex graphs}
We repeated our methods for $n=8$. There are $311$ graphs that can be
constructed by an H1 step (hence, $r_3(G)$ is known by H1 doubling
argument), while $63$ require an H2 step.  So we computed only the
complex bounds of the latter: $58$ of them have $96$ complex
embeddings or less, one has $112$ complex embeddings, $3$ have $128$
complex embeddings and there is a unique graph $G_{160}$ with $160$
complex embeddings.

\begin{figure}[!htb]
	\begin{tabular}{cc}
		$G_{160}$ & $G_{128}$ \vspace{1mm}\\
		\begin{tikzpicture}[yscale=0.65, xscale=0.82]
		\maxEightVert
		\draw[edge] (1) to (2)  (2) to (7)  (4) to (7)  (2) to (6)  (6) to (8)  (4) to (5)  (2) to (8)  (5) to (7)  (3) to (4) ;
		\draw[edge] (1) to (4)  (1) to (5)  (1) to (3)  (1) to (6)  (5) to (6)  (3) to (7)  (7) to (8)  (2) to (3)  (5) to (8) ;
		\node[lnode] at (1) {1};
		\node[lnode] at (2) {2};
		\node[lnode] at (3) {3};
		\node[lnode] at (4) {4};
		\node[lnode] at (5) {5};
		\node[lnode] at (6) {6};
		\node[lnode] at (7) {7};
		\node[lnode] at (8) {8};
		\end{tikzpicture}  &
		\begin{tikzpicture}[yscale=0.75, xscale=0.82]
		\ringEightVert
		\draw[edge] (2)edge(3)  (3)edge(4) (7)edge(2) (5)edge(4) (6)edge(5) (7)edge(6);
		\draw[edge] (2)edge(1) (1)edge(4) (1)edge(3) (1)edge(5) (1)edge(6) (1)edge(7);
		\draw[edge] (2)edge(8) (8)edge(4) (8)edge(3) (8)edge(5) (8)edge(6) (8)edge(7);
		\node[lnode] at (1) {1};
		\node[lnode] at (2) {2};
		\node[lnode] at (3) {3};
		\node[lnode] at (4) {4};
		\node[lnode] at (5) {5};
		\node[lnode] at (6) {6};
		\node[lnode] at (7) {7};
		\node[lnode] at (8) {8};
		\end{tikzpicture}  
	\end{tabular}

	\caption{$G_{160}$ has the maximal number of complex embeddings (160). We proved that $r_3(G_{128})=128$.}
	\label{fig:G160and128}
\end{figure}
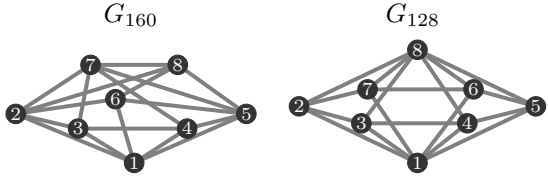

We were interested in improving the lower bound established previously.
No graph with less than $128$ embeddings could improve the bound obtained by $G_{48}$.
Thus, we applied the technique to maximize real embeddings on two different 8-vertex graphs:
$G_{160}$ and $G_{128}$, see Figure~\ref{fig:G160and128}. 
Both can be constructed by H2 step from~$G_{48}$, and the structure of  $G_{128}$ is similar to $G_{48}$.
Therefore, we use some lengths of $G_{48}$ with many embeddings for the common edges for the starting lengths.
The following edge lengths of $G_{128}$ with 128 real embeddings were found by the algorithm:\par\nobreak
{\parskip0pt \footnotesize \noindent
	\begin{align*}
	d_{12} &= 8.7093 , & d_{17} &= 2.1185 , & d_{68} &= 10.5532 , & d_{56} &= 0.7536 , \\
	d_{13} &= 10.3433 , & d_{28} &= 13.5773 , & d_{78} &= 10.5509 , & d_{67} &= 1.5449 ,  \\
	d_{14} &= 1.9373 , & d_{38} &= 14.6173 , & d_{23} &= 13.5267 , &  d_{27} &= 9.2728\,. \\
	d_{15} &= 1.9379 , & d_{48} &= 10.5237 , & d_{34} &= 10.1636 , \\
	d_{16} &= 2.0691 , & d_{58} &= 10.5237 , & d_{45} &= 0.0634 ,
	\end{align*}
}
For  $G_{160}$ we have obtained lengths for 132 real embeddings:\par\nobreak
{\parskip0pt \footnotesize \noindent
	\begin{align*}
	d_{12} &= 1.999 , &d_{23} &= 1.426, &d_{37} &= 10.447, &d_{58} &= 4.279, \\
	d_{13} &= 1.568 , &d_{26} &= 0.879, &d_{45} &=  7.278, &d_{68} &= 0.398, \\
	d_{14} &= 6.611, &d_{27} &= 10.536, &d_{47} &= 11.993, &d_{78} &= 10.474\,. \\
	d_{15} &= 4.402, &d_{28} &= 0.847,, &d_{56} &= 4.321, &\\
	d_{16} &= 1.994, &d_{34} &= 6.494, &d_{57} &= 11.239, &
	\end{align*}
}
More values for edge lengths of the presented graphs with various
number of embeddings are available in \cite{sourceCode}.
We remark that it takes only few seconds to construct all Geiringer graphs up to 8 vertices and compute their mixed volumes.
Complex embeddings computation takes approximately 1 second for one 7-vertex graph and 4 seconds for an 8-vertex graph.
Although we take advantage of tracking solutions in our implementation of the sampling method
(which speeds up the computation significantly), the time of sampling for $G_{160}$ takes about 8 hours using 8 cores (this time strongly depends on the starting lengths).
In order to get the lengths with 132 embeddings, we tested about 200 different starting conformations.

\subsection{Lower bounds}

To compute a lower bound on the maximum number of
embeddings for rigid graphs in the space with $n$ vertices, we use as
a building block a rigid graph $G$ with low $|V_G|$, but high $r_3(G)$.
To do so, we need the following theorem from~\cite{GraKouTsiLower17}:

\begin{thm}
	Let $G$ be a rigid graph,  with a rigid subgraph $H$.
	We construct a rigid graph using $k$ copies of $G$, where all the copies have the subgraph~$H$ in common.
	The new graph is rigid, has $n=|V_H|+k(|V_G|-|V_H|)$ vertices, and
	the number of its real embeddings is at least
	\begin{equation*}
	2^{(n-|V_H|)\!\mod(|V_G|-|V_H|)} \cdot r_3(H) \cdot
	\left(\frac{r_3(G)}{r_3(H)}\right)^{\left\lfloor\frac{n-|V_H|}{|V_G|-|V_H|}\right\rfloor}.
	\end{equation*}
\end{thm}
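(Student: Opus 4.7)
\medskip

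\noindent\textbf{Proof proposal.}
The plan is to combine a gluing principle for real embeddings with Jensen's inequality (applied to the extension counts per embedding of $H$), and to absorb the leftover vertices via H1 doublings. First, I would fix edge lengths $\mathbf{d}$ on $G$ that realize $r_3(G)$ real embeddings and let $\mathbf{d}_H = \mathbf{d}|_{E_H}$ be their restriction. Build the glued graph $G^{(k)}$ by taking $k$ copies of $G$ and identifying them along the common $H$; the gluing of rigid graphs along a rigid subgraph is rigid, so this yields a rigid graph on $|V_H|+k(|V_G|-|V_H|)$ vertices. Assign $\mathbf{d}_H$ to the shared $H$-part and, in each copy, assign the remaining edges according to $\mathbf{d}$.

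Every real embedding of $G^{(k)}$, taken modulo rigid motions, restricts to a real embedding of $H$. Let $\sigma_1,\dots,\sigma_t$ be the distinct restrictions that arise, and let $N_i$ be the number of real embeddings of a single copy of $G$ extending $\sigma_i$. Because different copies share only the vertices of $H$, once $\sigma_i$ is fixed the extensions in the $k$ copies can be chosen independently, and hence
\[
\#\{\text{real embeddings of } G^{(k)}\} \;=\; \sum_{i=1}^{t} N_i^{\,k}.
\]
By construction $\sum_i N_i = r_3(G)$, while $t \leq r_3(H,\mathbf{d}_H) \leq r_3(H)$ since the $\sigma_i$ are distinct real embeddings of $H$ under lengths $\mathbf{d}_H$.

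Applying Jensen's inequality to the convex function $x\mapsto x^k$ ($k\ge 1$) gives
\[
\sum_{i=1}^{t} N_i^{\,k} \;\geq\; t \left( \frac{1}{t}\sum_{i=1}^{t} N_i \right)^{\!k} \;=\; t^{\,1-k}\, r_3(G)^k.
\]
Since $1-k\leq 0$ and $t\leq r_3(H)$, we have $t^{\,1-k}\geq r_3(H)^{\,1-k}$, and therefore
\[
\sum_{i=1}^{t} N_i^{\,k} \;\geq\; r_3(H)\cdot\left(\frac{r_3(G)}{r_3(H)}\right)^{\!k}.
\]
Finally, to accommodate a general number of vertices $n$, I would take $k=\lfloor (n-|V_H|)/(|V_G|-|V_H|)\rfloor$ copies and append $m=(n-|V_H|)\bmod(|V_G|-|V_H|)$ further H1 steps; each H1 step places a new vertex at the intersection of three spheres and, as used elsewhere in the paper, can be arranged to yield two real positions, contributing the factor $2^{m}$.

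The step I expect to be the main obstacle is the bookkeeping tying $t$ to $r_3(H)$: one might wish that $\mathbf{d}_H$ realizes $r_3(H)$ embeddings of $H$ so that $t=r_3(H)$ and the Jensen bound is tight, but this need not hold. The saving grace is that $k\geq 1$ makes $t^{1-k}$ a decreasing function of $t$, so the inequality $t\leq r_3(H)$ works in our favor and no separate genericity argument is required. A secondary check is that gluing along a rigid $H$ preserves rigidity and that the $k$ extensions are indeed independent once $H$'s placement is fixed; both follow because the only edges between two distinct copies of $G\setminus H$ would have to lie inside $H$, which they do not.
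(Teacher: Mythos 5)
The paper does not prove this theorem itself---it imports it verbatim from \cite{GraKouTsiLower17}---but your argument (glue $k$ copies along $H$, observe that for each placement of $H$ the extensions in the copies are independent so the count is $\sum_i N_i^k$, apply the power-mean/Jensen inequality with $t\le r_3(H)$, and absorb the remaining $(n-|V_H|)\bmod(|V_G|-|V_H|)$ vertices via H1 doublings) is correct and is essentially the proof given in that reference. The only caveat worth recording is that the inequality $t\le r_3(H)$ tacitly assumes $r_3(H,\mathbf{d}|_{E_H})$ is finite for the chosen optimal lengths, which is harmless here since in all applications $H$ is a triangle and $t=1$.
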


If we use $G_{160}$ as $G$ and one of its triangle subgraphs as $H$, then we obtain the following lower bound.
\begin{cor}
	The maximum number of real embeddings of rigid graphs in $\RR^3$ with $n$ vertices is bounded from below by
	\begin{equation*}
	2^{(n-3) \mod   5} \, 132^{\lfloor (n-3)/5 \rfloor}  \,.
	\end{equation*}
	The bound asymptotically behaves as $2.6553^{n}$.
\end{cor}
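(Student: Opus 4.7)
The plan is a direct specialization of the gluing theorem quoted immediately above, applied to the building block $G=G_{160}$. The only inputs needed are the two parameters $|V_G|$ and $r_3(G)$ for the big graph, together with the analogous data for a common subgraph $H$, so the proof is essentially a substitution.

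First I would fix $H$ to be a triangle, i.e.\ any of the many $K_3$ subgraphs of $G_{160}$ (such a triangle exists by inspection of Figure~\ref{fig:G160and128}). Then $|V_H|=3$ and $r_3(H)=1$, since three vertices connected pairwise by edges of prescribed lengths have a unique embedding in~$\RR^3$ modulo rigid motions. For the big graph I would use $|V_G|=8$ and the lower bound $r_3(G_{160})\ge 132$ certified in the previous subsection by the explicit edge lengths $\mathbf{d}$ producing 132 real embeddings (here I rely on the fact that $r_3$ is defined as a maximum, so any witnessing choice of edge lengths suffices to lower-bound it). Plugging these into the theorem gives $|V_G|-|V_H|=5$ and
\begin{equation*}
r_3(n)\;\ge\;2^{(n-3)\bmod 5}\cdot 1\cdot\Bigl(\tfrac{132}{1}\Bigr)^{\lfloor (n-3)/5\rfloor}\;=\;2^{(n-3)\bmod 5}\cdot 132^{\lfloor (n-3)/5\rfloor},
\end{equation*}
which is exactly the stated inequality.

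For the asymptotic claim I would just take $n$-th roots. Writing $n-3=5q+s$ with $s\in\{0,1,2,3,4\}$, the bound is at least $132^{q}=132^{(n-3-s)/5}$, so
\begin{equation*}
r_3(n)^{1/n}\;\ge\;132^{(n-3-s)/(5n)}\;\longrightarrow\;132^{1/5}\quad\text{as }n\to\infty.
\end{equation*}
A direct numerical check gives $132^{1/5}=\exp(\tfrac{1}{5}\ln 132)\approx 2.6553$, which matches the stated base.

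There is essentially no hard step here: the theorem from \cite{GraKouTsiLower17} does all the work, and $r_3(\text{triangle})=1$ is immediate. The only subtle point worth a sentence in the write-up is why a triangle of $G_{160}$ qualifies as a rigid subgraph in the sense of that theorem; this is clear because $K_3$ is trivially rigid in $\RR^3$ (it is a two-dimensional simplex embedded in space, with no internal degree of freedom up to rigid motions) and Figure~\ref{fig:G160and128} exhibits many 3-cycles in $G_{160}$. Everything else is arithmetic.
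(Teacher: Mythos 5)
Your proposal is correct and is exactly the argument the paper intends: substitute $G=G_{160}$ with $|V_G|=8$ and the certified witness $r_3(G_{160})\ge 132$, take $H$ a triangle with $|V_H|=3$ and $r_3(H)=1$ into the gluing theorem, and observe $132^{1/5}\approx 2.6553$. The one point you add that the paper leaves implicit --- that a witnessing set of edge lengths suffices because $r_3$ is a maximum, so the theorem can be applied with the lower bound $132$ in place of the unknown exact value of $r_3(G_{160})$ --- is a worthwhile clarification, not a deviation.
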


The previous lower bound was $2.51984^n$ \cite{Emiris1}, whereas using
$G_{48}$ as $G$ gives $2.6321^{n}$.  It would be tempting to use as
$H$ a tetrahedron, say $T$, for which it holds $r_3(T) = 2$.  Such a choice of a subgraph would have further
improved the lower bound as the denominator of the exponent would have
been smaller.  Unfortunately, $G_{48}, G_{128}$, and $G_{160}$ do not contain a tetrahedron as a
subgraph.

\section{Conclusion}
By exploiting the (semi-)algebraic modeling of the embeddings of minimally
rigid spatial graphs we present new classification results
and a novel method to maximize
$r_3(G)$. The latter led to improved lower bounds.
Finding better asymptotic bounds is always an open question.  A first
step should be to find out the exact value of $r_3(G_{160})$.  
Furthermore, computations using our method for
$n\geq 9$ may also give better results in this direction.  Another
direction is to find an efficient variant of our sampling method
for other dimensions.  Finally, subsystems of the determinantal
varieties may improve the upper bounds for $c_3(G)$.

\paragraph{Acknowledgments}
This work is part of the project ARCADES that has received funding
from the European Union's Horizon~2020 research and innovation
programme under the Marie Sk\l{}odowska-Curie grant agreement
No~675789.  ET is partially supported by ANR JCJC GALOP
(ANR-17-CE40-0009) and the PGMO grant GAMMA.

\vspace{15pt}
\bibliographystyle{plain}
\bibliography{rigid}

\end{document}